\newtheorem{theorem}{Theorem}[section]
\newtheorem{claim}[theorem]{Claim}
\theoremstyle{definition}
\newtheorem{question}[theorem]{Question}
\newcommand{\U}{\mathcal U}
\newcommand{\w}{\omega}
\newcommand{\IP}{\mathbb P}
\newcommand{\B}{\mathcal{B}}
\newcommand{\K}{\mathcal{K}}
\newcommand{\A}{\mathcal{A}}
\newcommand{\F}{\mathcal{F}}
\newcommand{\V}{\mathcal{V}}
\newcommand{\bb}{\mathfrak b}
\newcommand{\uhr}{\upharpoonright}
\newcommand{\la}{\langle}
\newcommand{\ra}{\rangle}
\newcommand{\hot}{\mathfrak}
\newcommand{\nothing}[1]{}
\newcommand{\zrost}{\w^{\uparrow\w}}
\title[Covering properties of $\w$-mad families]{Covering properties of $\w$-mad families}
\author{Leandro Aurichi  and Lyubomyr Zdomskyy}
\address{Instituto de Ci\^encias Matem\'aticas e de Computa\c c\~ao, Universidade de
 S\~ao Paulo (ICMC-USP), S\~ao Carlos, Brazil}
\email{aurichi@icmc.usp.br}
 \urladdr{icmc.usp.br/pessoas/aurichi/}
\address{Institut f\"ur Diskrete Mathematik und Geometrie, Technische Universit\"at Wien, Wiedner Hauptstra\ss e 8-10/104, 1040 Wien, Austria.}
\email{lzdomsky@gmail.com}
\urladdr{http://dmg.tuwien.ac.at/zdomskyy/}
\subjclass[2010]{Primary: 03E35, 54D20. Secondary:  03E05.}
\keywords{Menger space,  mad family, Cohen forcing,
 Laver forcing.}
\thanks{The first author was  supported by
FAPESP (2017/09252-3). The second author would like to thank  the Austrian Science Fund FWF (Grant I 2374-N35) for generous support for this research. }
\begin{document}
\begin{abstract}
We prove that CH implies the existence of a
Cohen-indestructible mad family such that the Mathias forcing
associated to its filter adds dominating reals, while $\hot b=\hot
c$ is consistent with the negation of this statement as witnessed by
the Laver model for the consistency of Borel's conjecture.
\end{abstract}

\maketitle

\section{Introduction}
Recall that an infinite $\A\subset[\w]^\w$ is called a \emph{mad
family}, if $|A_0\cap A_1|<\w$ for any distinct $A_0,A_1\in \A$, and for every $B\in[\w]^\w\setminus\A$
there exists $A\in\A$ such that $|B\cap A|=\w$.
In \cite[Theorem~2.1]{Bre98} Brendle constructed under CH a mad
family $\A$ on $\w$ such that the Mathias forcing\footnote{Since we
shall not analyze this poset directly but rather use certain
topological characterizations, we refer the reader to, e.g.,
\cite{Bre98} for its definition.} $M_{\F(\A)}$ associated to the
filter
$$ \F(\A)=\{F\subset\w:\exists \A'\in[\A]^{<\w}(\w\setminus\cup\A'\subset^* F)\}$$
adds a dominating real. In the same paper Brendle asked whether such
a mad family can be constructed outright in ZFC. This question has
been answered in the affirmative in \cite{GuzHruMar??} and later
independently also in \cite{ChoRepZdo15} using different methods. Since the goal
  of these studies was to
find forcings destroying a given mad family while keeping
(certain subsets of) the ground model reals unbounded (and perhaps
having other useful properties), this motivates the following version of
Brendle's question: Suppose that a mad family $\A$ cannot be
destroyed by some very ``mild''  forcing $\IP$, i.e., it remains
maximal in $V^{\IP}$, must then $M_{\F(\A)}$ add dominating reals?
This approach seems natural because if $\A$ is already destroyed by
$\IP$, there is no need to use its Mathias forcing for its
destruction in a hypothetic construction of a model where, e.g.,
$\hot b$ should stay small. $\hot b$ as well as other notions used
in the introduction will be defined in the next section. In this
note we consider this question for $\IP$ being the Cohen forcing
$\mathbb C$. Mad families $\A$ which remain maximal in $V^{\mathbb
C}$ will be called Cohen-indestructible.

\begin{theorem} \label{p_equals_c}
$\hot p=\mathit{cov}(\mathcal N)=\hot c$ implies the existence of a
Cohen-indestructible mad family $\A$ such that $M_{\F(\A)}$ adds a
dominating real.
\end{theorem}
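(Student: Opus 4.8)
The plan is a transfinite recursion of length $\hot c$ producing an increasing chain $\la\A_\alpha:\alpha<\hot c\ra$ of infinite almost disjoint families with $|\A_\alpha|\le|\alpha|+\w$, $\A_{\alpha+1}=\A_\alpha\cup\{A_\alpha\}$, unions taken at limits, and $\A:=\bigcup_{\alpha<\hot c}\A_\alpha$. Throughout we keep the filter $\F_\alpha$ generated by $\{\w\sm A:A\in\A_\alpha\}$ proper (which will be automatic once each $\A_\alpha$ is an infinite almost disjoint family built as below). Fix once and for all a partition $\w=\bigsqcup_{n\in\w}E_n$ into infinite sets; its role is to witness in the end that $M_{\F(\A)}$ adds a dominating real, using the characterization (essentially Canjar's theorem; see \cite{ChoRepZdo15} and the references therein) that this happens as soon as there is a sequence $\la X_n:n\in\w\ra$ of pairwise disjoint sets none of which lies in the dual ideal $\F(\A)^*$ (equivalently, is covered modulo finite by finitely many members of $\A$), while $\bigcup_n s_n\in\F(\A)^*$ for every $\la s_n:n\in\w\ra\in\prod_n[E_n]^{<\w}$; we shall arrange exactly this for $X_n=E_n$. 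Finally we fix bookkeeping handing us, at stage $\alpha$, one of three tasks: \textbf{(i)} a set $B_\alpha\in[\w]^\w$ (maximality); \textbf{(ii)} a pair $(\dot B_\alpha,c_\alpha)$ consisting of a nice $\mathbb C$-name for an element of $[\w]^\w$ and a condition $c_\alpha\in\mathbb C$ (Cohen-indestructibility --- there are only $\hot c$ such pairs, $\mathbb C$ being countable); \textbf{(iii)} a ``selector'' $s_\alpha=\bigcup_ns_\alpha(n)$, $s_\alpha(n)\in[E_n]^{<\w}$ (dominating real); as usual each object is handed over at a stage at which it already makes sense.

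At stage $\alpha$ we build $A_\alpha\in[\w]^\w$ almost disjoint from every member of $\A_\alpha$, keeping every $E_n$ positive with respect to $\F_{\alpha+1}$, and handling the current task: if \textbf{(i)} and $B_\alpha$ is still $\F_\alpha$-positive, then $|A_\alpha\cap B_\alpha|=\w$; if \textbf{(ii)} and some $c_\alpha'\le c_\alpha$ forces $\dot B_\alpha$ almost disjoint from all of $\A_\alpha$, then $c_\alpha'\forces|A_\alpha\cap\dot B_\alpha|=\w$; if \textbf{(iii)} and $s_\alpha\notin\F_\alpha^*$, then $s_\alpha\sbst^*\bigcup K\cup A_\alpha$ for some finite $K\sbst\A_\alpha$. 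For \textbf{(i)} and \textbf{(ii)} the tool is Martin's axiom for $\sigma$-centered posets, available since $\hot p=\hot c$ (Bell): use the poset of triples $(t,r,K)$ with $t,r\in[\w]^{<\w}$ disjoint and $K\in[\A_\alpha]^{<\w}$ --- $t$ and $r$ finite approximations to $A_\alpha$ and to $\w\sm A_\alpha$, and $K$ a finite list of members of $\A_\alpha$ to be avoided by $A_\alpha$ henceforth --- ordered the obvious way; it is $\sigma$-centered (conditions agreeing on $(t,r)$ are compatible), its generic $A_\alpha$ is automatically almost disjoint from $\A_\alpha$, and fewer than $\hot c$ dense sets make $A_\alpha$ infinite, make it meet $B_\alpha$ (resp.\ catch $\dot B_\alpha$), and keep each $E_n$ positive (by pushing into $r$ a $\F_\alpha$-positive fraction of each $E_n$). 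The one delicate density is the Cohen one: for $k\in\w$ and $c''\le c_\alpha'$, the set of $(t,r,K)$ admitting $u\in[t]^k$ and $c'''\le c''$ with $c'''\forces u\sbst\dot B_\alpha$ is dense, because $c_\alpha'$ forces $\dot B_\alpha\cap\bigcup K$ finite, hence $\dot B_\alpha\sm(\bigcup K\cup r)$ infinite, for every finite $K\sbst\A_\alpha$; meeting all of them gives $c_\alpha'\forces|A_\alpha\cap\dot B_\alpha|=\w$.

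Task \textbf{(iii)} is where the second hypothesis is spent. If $s_\alpha$ meets only finitely many members of $\A_\alpha$ infinitely, say those forming a finite $K$, then $A_\alpha:=s_\alpha\sm\bigcup K$ is an infinite selector almost disjoint from $\A_\alpha$, with $s_\alpha\sbst\bigcup K\cup A_\alpha$, and it keeps every $E_n$ positive since it is finite on each block; so this case is easy. The real difficulty is that $s_\alpha$ may meet infinitely many members of $\A_\alpha$ infinitely, when no single new set can absorb it; here I would at every step additionally choose $A_\alpha$ outside a null set $N_\alpha\sbst2^\w$ coded from $\A_\alpha$, $\la E_n\ra$ and $s_\alpha$ --- the ``bad'' refinements, those that would later allow a selector to be split across infinitely many members of $\A$, or would pull some $E_n$ into $\F(\A)^*$ --- which is possible because $\bigcup_{\beta\le\alpha}N_\beta\ne2^\w$ whenever $|\alpha|<\hot c=\cov(\NN)$; equivalently one builds $A_\alpha$ as a suitably random element of the conditional space of almost disjoint refinements of $\A_\alpha$. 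Reconciling this genericity with the $\sigma$-centered construction and with Cohen-catching is, I expect, the main obstacle: maximality and indestructibility pull $A_\alpha$ towards being spread out, the selector requirement pulls it towards being thin along $\la E_n\ra$, and carrying both through cofinally many steps is exactly what forces one to assume $\cov(\NN)=\hot c$ on top of $\hot p=\hot c$.

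Granting the step, the rest is routine. $\A$ is mad: any $B\in[\w]^\w$ is either still $\F(\A)$-positive --- then it was $\F_\alpha$-positive at the stage $\alpha$ treating it, so $|A_\alpha\cap B|=\w$ --- or it is covered modulo finite by finitely many members of $\A$, hence meets one of them infinitely. $\A$ is Cohen-indestructible: if some $c\in\mathbb C$ forced some nice name $\dot B$ (which we may assume forced infinite) to be almost disjoint from $\A$, hence from $\A_\alpha$ at the stage $\alpha$ treating $(\dot B,c)$, then we would have obtained $A_\alpha\in\A$ with $c\forces|A_\alpha\cap\dot B|=\w$, a contradiction; hence $\A$ stays maximal in $V^{\mathbb C}$. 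Finally, by construction every $E_n$ is $\F(\A)$-positive while every selector of $\la E_n\ra$ lies in $\F(\A)^*$, so $\la E_n\ra$ witnesses the failure of Canjar-ness of $\F(\A)$, whence $M_{\F(\A)}$ adds a dominating real.
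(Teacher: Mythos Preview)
Your proposal has a genuine gap precisely where you yourself flag ``the main obstacle'': task \textbf{(iii)}. Once a selector $s=\bigcup_n s(n)$ has infinite intersection with infinitely many members of $\A_\alpha$, no enlargement of $\A_\alpha$ can ever place $s$ in $\F(\A)^*$, since $\A_\alpha\subset\A$ and $s\in\F(\A)^*$ would force $s\subset^*\bigcup K$ for some finite $K\subset\A$. Hence the construction must \emph{prevent} this situation from ever arising, for every one of the $2^\w$ selectors, and your device of avoiding a null set ``$N_\alpha$ coded from $\A_\alpha$, $\la E_n\ra$ and $s_\alpha$'' does not accomplish this: for any infinite $s$ the set $\{A\in 2^\w:|A\cap s|=\w\}$ is of full measure and comeager, not null; moreover the damage to a given $s$ is cumulative over $\w$ many earlier stages, not caused by a single $A_\alpha$, so a stage-by-stage null-set restriction gives no control. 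Your own ``granting the step, the rest is routine'' concedes that the step is absent --- but the step is the theorem. The tension you correctly identify (spreading $A_\alpha$ out for madness and Cohen-catching versus keeping it thin along $\la E_n\ra$) is exactly what must be \emph{resolved}, not postulated away.

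The paper avoids this tension by organizing the construction around a different witness for ``$M_{\F(\A)}$ adds a dominating real'', namely an explicit failure of the Menger property. Besides a partition $\la I_k\ra$ it fixes growing blocks $P_k=2^{k+1}\setminus 2^k$ and maintains $|A_\beta\cap P_k|\le 2$ for all $\beta,k$ (condition~(ii)) together with $\{n\in I_k:P_n\cap\bigcup_{\beta\in a}A_\beta=\emptyset\}$ infinite for every finite $a$ (condition~(iii)); the latter makes each $\U_k=\{\{X:P_n\subset X\}:n\in I_k\}$ an open cover of $\F(\A)$. A potential Menger selection from $\la\U_k\ra$ is coded by a single function $f_\alpha$, and condition~(v) demands that $A_\alpha$ meet $P_n$ for every $n$ in its range, so $\w\setminus A_\alpha\in\F(\A)$ defeats that selection. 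The hypothesis $\cov(\mathcal N)=\hot c$ is spent exactly here, and concretely: choosing one point of $A_\alpha$ in each required $P_{n_i}$ while staying almost disjoint from all earlier $A_\beta$'s amounts to finding $h\in\prod_iP_{n_i}$ eventually different from fewer than $\hot c$ given functions, and because $|P_{n_i}|=2^{n_i}$ the set of functions not eventually different from a fixed $g$ is null for the natural product measure (Claim~2.2). The hypothesis $\hot p=\hot c$ is used only to take pseudointersections of countable centered families in the step that makes $\A$ $\w$-mad (condition~(iv)); $\w$-madness then yields Cohen-indestructibility without any chasing of $\mathbb C$-names or $\sigma$-centered posets.
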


Recall from \cite{Kur01} that a mad family $\A$ is called
\emph{$\w$-mad} if for every sequence $\la X_n:n\in\w\ra$ of
elements of $\F(\A)^+$ there exists $A\in\A$ such that $|A\cap
X_n|=\w$ for all $n$. Cohen-indestructible mad families are closely
related to $\w$-mad ones, see \cite{Mal90} or
\cite[Theorem~4]{Kur01}: Every $\w$-mad family is
Cohen-indestructible, and if $\A$ is Cohen-indestructible, then for
every $X\in\F(\A)^+$ there exists $Y\subset X, $ $Y\in\F(\A)^+$,
such that $\A\uhr Y=\{A\cap Y:A\in\A, A\cap Y$ is infinite$\}$ is
$\w$-mad as a mad family on $Y$.

In the proof of Theorem~\ref{p_equals_c} we actually construct an
$\w$-mad family. The next theorem shows that $\hot b=\hot c$ would
not suffice in Theorem~\ref{p_equals_c} (recall that $\hot p\leq\hot
b$, see, e.g., \cite{Bla10}), and we do not know whether any of the
equalities $\hot p=\hot c$ or $\mathit{cov}(\mathcal N)=\hot c$
would be sufficient.

\begin{theorem} \label{laver}
In the Laver model for the consistency of the Borel conjecture, for
every $\w$-mad family $\A$ the poset $M_{\F(\A)}$ does not add
dominating reals. In particular, if $\A$ is Cohen-indestructible,
then there exists $X\in\F(\A)^+$ such that $M_{\F(\A)\uhr X}$ does
not add dominating reals, where $\F(\A)\uhr X$ denotes the filter on
$\w$ generated by the centered family $\{F\cap X:F\in\F(\A)\}$.
\end{theorem}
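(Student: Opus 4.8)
The plan is to work in the Laver model $V[G]$, obtained by a countable-support iteration $\langle \IP_\alpha,\name\IQ_\alpha:\alpha<\w_2\rangle$ of Laver forcing over a model of CH. Fix an $\w$-mad family $\A\in V[G]$, and suppose toward a contradiction that $M_{\F(\A)}$ adds a dominating real over $V[G]$. The strategy is to combine two ingredients: first, a \emph{topological characterization} of ``$M_{\F(\A)}$ does not add dominating reals'', namely that this is equivalent to the space $\w\cup\{\F(\A)\}\sbst\beta\w$ (or an associated space of ultrafilters/filters) having a suitable covering property — concretely, the Menger property of a Mathias-type space, which is the standard reformulation making the Laver-model argument go through. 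Second, the well-known preservation theorems for Laver forcing: the iteration has the Laver property, does not add Cohen reals, adds dominating reals cofinally, and most importantly satisfies an appropriate \emph{$\w^\w$-bounding-like preservation} relative to the ground-model reals that is exploited in the standard proof of Borel's conjecture.

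**The core reflection step.**
The main step is a reflection argument. Since $\A\in V[G]$ and $|\A|\le\cc=\w_2$, and Laver iteration has the $\w_2$-c.c.\ (under CH in the ground model), I would find an intermediate model $V[G_\alpha]$ for some $\alpha<\w_2$ of cofinality $\w_1$ such that $\A\cap V[G_\alpha]$ is already $\w$-mad in $V[G_\alpha]$ — here one uses that $\w$-madness is witnessed by countable sequences from $\F(\A)^+$, together with a standard argument that $\F(\A)^+$ reflects; this is where the $\w$-madness hypothesis (rather than mere maximality) is essential, since it survives the reflection. Then the tail of the iteration, which is again (equivalent to) a Laver iteration, is applied over $V[G_\alpha]$. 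The heart of the matter is to show that this tail iteration forces ``$M_{\F(\A)}$ does not add a dominating real'' to be preserved, i.e., that Laver iteration preserves the relevant Menger/covering property of the filter space associated to an $\w$-mad family.

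**The preservation lemma — the main obstacle.**
The hard part will be proving the preservation lemma: if $\A$ is $\w$-mad (in particular Cohen-indestructible, by Kurili\'c's theorem \cite[Theorem~4]{Kur01}) and $M_{\F(\A)}$ does not add a dominating real, then this is preserved by a countable-support iteration of Laver forcing. One direction — that $\A$ stays mad — is exactly Cohen-indestructibility-type preservation for Laver forcing, which is standard since Laver forcing has the Laver property and adds no Cohen reals (so it preserves $P$-point-like and madness-indestructibility properties). The genuinely delicate point is preserving the statement that the associated Mathias forcing adds no dominating real: one must set up a two-dimensional fusion argument along the Laver iteration, simultaneously handling a name for a potential dominating real produced by $M_{\F(\A)}$, and use that a Laver-generic real, together with the Laver property, cannot help $M_{\F(\A)}$ to dominate — exploiting that any new real in the extension is ``trapped'' by a ground-model slalom. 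I expect the technical crux to be the \emph{amalgamation of the Mathias quotient with the Laver iteration}: showing that $M_{\F(\A)}$ computed in $V[G]$ is, from the point of view of the relevant domination question, not stronger than $M_{\F(\A\cap V[G_\alpha])}$ computed in $V[G_\alpha]$ followed by the tail. Once the preservation lemma is in hand, the ``in particular'' clause of the theorem is immediate: given a Cohen-indestructible $\A$, the cited result (Malykhin/Kurili\'c) yields $X\in\F(\A)^+$ with $\A\uhr X$ $\w$-mad on $X$, and applying the first part of the theorem to $\A\uhr X$ in place of $\A$ — noting $\F(\A\uhr X)=\F(\A)\uhr X$ — shows $M_{\F(\A)\uhr X}$ adds no dominating real in the Laver model.
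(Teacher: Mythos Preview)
Your outline has a genuine structural gap. The scheme is: reflect to an intermediate $V[G_\alpha]$, then prove a \emph{preservation} lemma showing that the tail Laver iteration preserves ``$M_{\F(\A)}$ adds no dominating real''. But a preservation lemma needs the statement to already hold in the model you start from, and you give no argument for that. If the intent is to induct on $\alpha$, the base case is the CH ground model $V$ --- and there the statement is outright false: Theorem~\ref{p_equals_c} of this very paper builds, under CH, an $\w$-mad family $\A$ for which $M_{\F(\A)}$ \emph{does} add a dominating real. So the property you want to preserve is not present to be preserved; the Laver iteration must \emph{create} it, and nothing in your sketch explains how. There is a second mismatch: even granting that $\A\cap V[G_\alpha]$ is $\w$-mad in $V[G_\alpha]$, the filter $\F(\A)$ in $V[G]$ is generated by the full $\A$, not by $\A\cap V[G_\alpha]$, so a preservation statement about $\F(\A\cap V[G_\alpha])$ would not speak to $M_{\F(\A)}$ in $V[G]$. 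Finally, the ``two-dimensional fusion'' and the ``amalgamation of the Mathias quotient with the Laver iteration'' are named as obstacles rather than carried out.

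The paper's proof proceeds along an entirely different line and never touches the iteration directly. It works inside the Laver model and uses the combinatorial reformulation \cite[Corollary~2.2]{ChoRepZdo15}: it suffices that for every decreasing sequence $\la S_n:n\in\w\ra$ in $(\F^{(<\w)})^+$ there is $f\in\w^\w$ with $\bigcup_n(S_n\cap\mathcal P(f(n)))\in(\F^{(<\w)})^+$. Using $\w$-madness together with $\hot b=\w_2$, one shows that for every countable subset of the compact product $\prod_n\K(S_n)$ there are $\w_2$-many pairs $\la A,f\ra\in\A\times\zrost$ whose associated $G_\delta$ set $G_{A,f}\subset\prod_n\K(S_n)$ contains that subset. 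A covering lemma specific to the Laver model, \cite[Lemma~2.2]{RepZdo17}, then yields $\w_1$-many such $G_{A_\alpha,f_\alpha}$ covering $\prod_n\K(S_n)$, with each point covered infinitely often; a single $f\geq^* f_\alpha$ for all $\alpha$ (again by $\hot b=\w_2$) finishes the argument. All the iteration-theoretic work is hidden inside the cited lemma from \cite{RepZdo17}. Your derivation of the ``in particular'' clause from the first part, via the Malykhin--Kurili\'c restriction to an $\w$-mad trace, is correct.
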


In our proofs of Theorems~\ref{p_equals_c} and \ref{laver} we shall
not work with the Mathias forcing directly, but rather use the
following characterization obtained in \cite{ChoRepZdo15}: For a
filter $\F$ on $\w$ the poset $M_{\F}$ adds no dominating reals iff
$\F$ has the Menger covering property when considered with the
topology inherited from $\mathcal P(\w)$, which is identified with
the Cantor space $2^\w$ via characteristic functions. Recall from
 \cite{Hur25} that a topological space
$X$ is said to have the Menger property if for every sequence $\la
\U_n : n\in\omega\ra$ of open covers of $X$ there exists a sequence
$\la \V_n : n\in\omega \ra$ such that each $\V_n$ is a finite
subfamily of $\U_n$ and the collection $\{\cup \V_n:n\in\omega\}$ is
a cover of $X$. The current name (the Menger property) has been
adopted because Hurewicz proved in   \cite{Hur25} that for
metrizable spaces his property is equivalent to a certain basis
property considered by Menger in \cite{Men24}. If in the definition
above we additionally require that $\{\cup\V_n:n\in\w\}$ is a
\emph{$\gamma$-cover} of $X$ (this means that the set
$\{n\in\w:x\not\in\cup\V_n\}$ is finite for each $x\in X$), then we
obtain the definition of the Hurewicz covering property introduced
in \cite{Hur27}. These properties are related as follows:

\centerline{$\sigma$-compact \ $\to$ \ Hurewicz \ $\to$ \ Menger \ $\to$ \ Lindel\"of'}

\noindent Contrary to a conjecture of Hurewicz, the class of
metrizable spaces having the Hurewicz property turned out  to be
wider than the class of $\sigma$-compact spaces
\cite[Theorem~5.1]{COC2}. Also, there are ZFC examples of
non-Hurewicz subspaces $X$ of the real line whose all finite powers
are Menger, see \cite{ChaPol??} or \cite{TsaZdo08}.

In light of Theorem~\ref{laver} we would like to ask whether it is
 consistent that $\F(\A)$ is Hurewicz for any $\w$-mad family $\A$.
However, since it is unknown whether $\w$-mad families exist in ZFC, we suggest the following

\begin{question}
Is it consistent that there exist $\w$-mad families and $\F(\A)$ is Hurewicz for any such a family $\A$?
Is this the case in the Laver model?
\end{question}

\section{Proofs}

Let us first recall the definitions of cardinal characteristics
appearing in this paper. $\hot p$ is the minimal cardinality of a
family $\mathcal X\subset [\w]^\w$ such that $\cap\mathcal X'\in
[\w]^\w$ for any $\mathcal X'\in [\mathcal X]^{<\w}$, but there is
no $Y\in [\w]^\w$ such that $Y\subset^*X$ for all $X\in\mathcal X$.
$\hot b$ is the minimal cardinality of an unbounded  subset $B$ of
$\w^\w$ with respect to the following pre-order: $x\leq^* y$ iff
$\{n\in\w: x(n)>y(n)\}$ is finite. Finally, $\mathit{cov}(\mathcal
N)$ is the minimal cardinality of a cover of  $\mathbb R$ by
Lebesgue null sets. It is well-known that $\hot p
=\mathit{cov}(\mathcal N)=\w_1\leq \hot b=\w_2=\hot c$ in the Laver
model, see, e.g., \cite[p.~480]{Bla10} and references therein.

We shall first prove Theorem~\ref{p_equals_c}.  Here we shall often
use the following easy fact without mentioning it: For any countable
collection $\A$ of countable sets, for every $A\in\A$ there exists
$B(A)\in [A]^\w$ such that $B(A)\cap B(A')=\emptyset$ for any
distinct $A,A'\in\A$.
\medskip

\noindent\textit{Proof of Theorem~\ref{p_equals_c}.}  \ We shall
first present the proof under CH,
 and then indicate
what should be changed  to make the proof work under $\hot
p=\mathit{cov}(\mathcal N)=\hot c$.

 Let $\la I_n:n\in\w\ra$ be a sequence of infinite mutually
disjoint subsets of $\w$. For every $k\in\w$ set
$P_k=2^{k+1}\setminus 2^k$ and note that elements of
$\{P_k:k\in\w\}$ are mutually disjoint. Let $\{\la
X^\alpha_n:n\in\w\ra :\alpha<\w_1\}$ be the family of all sequences
of infinite subsets of $\w$. Let us also fix an enumeration
$\{f_\alpha:\alpha<\w_1\}$ of the family $F$ of all increasing
$f\in\w^\w$ such that $\{n\in\w:\exists k(n\in I_k\wedge P_n\subset
f(k))\}$ is infinite. Note that $f\in F$ and $f\leq^* f'$ yields
$f'\in F$, and hence $F$ is obviously dominating with respect to
$\leq^*$.

 By transfinite induction on $\alpha$ we shall construct a sequence
$\la A_\alpha:\alpha<\w_1\ra$ of infinite subsets of $\w$ satisfying
the following properties:
\begin{itemize}
\item[$(i)$] $|A_\beta\cap A_\gamma|<\w$ for all $\beta\neq\gamma$;
\item[$(ii)$] $|A_\beta\cap P_k|\leq 2$ for every $\beta\in\w_1$ and
$k\in\w$;
\item[$(iii)$] For every $a\in [\w_1]^{<\w}$ and $k\in\w$ the set
$\{n\in I_k: \bigcup_{\beta\in a}A_\beta\cap P_n=\emptyset\}$ is
infinite;
\item[$(iv)$] For every $\beta\in\w_1$, if
$|X^\beta_n\setminus\bigcup_{\gamma\in a}A_\gamma|=\w$ for all
$n\in\w$ and finite $a\subset\beta$, then $|A_\beta\cap
X^\beta_n|=\w$ for all $n\in\w$; and
\item[$(v)$] $A_\beta\cap P_k\neq\emptyset$
provided that  $k\in I_n$ and $P_k\subset f_\beta(n) $.
%\item[$()$]
\end{itemize}
Assuming that conditions $(i)$-$(v)$ are satisfied for all
$\beta,\gamma<\alpha$ and $a\subset\alpha$, let us consider the
sequence $\la X^\alpha_n:n\in\w\ra$. Two cases are possible.
\smallskip

1. $|X^\alpha_n\setminus\bigcup_{\gamma\in a}A_\gamma|=\w$ for all
$n\in\w$ and finite $a\subset\alpha$, i.e., the premises of $(iv)$
hold for $\alpha$. Let us note that if we shrink the sets
$X^\alpha_n$'s so that the premises in $(iv)$ are still satisfied,
the required conclusion of the property $(iv)$ becomes harder to
fulfill. Thus passing to an infinite pseudointersection of the
countable family
$$ \{X^\alpha_n\setminus\bigcup_{\gamma\in a}A_\gamma : a\in [\alpha]^{<\w}\} $$
of infinite subsets of $X^\alpha_n$, we may assume that
$|X^\alpha_n\cap A_\beta|<\w$ for all $n\in\w$ and $\beta<\alpha$.
Let $g\in\w^\w$ be such that for all $\beta<\alpha$ there exists
$n\in\w$ with the property $X^\alpha_m\cap A_\beta\subset g(m)$ for
all $m\geq n$. Letting $Y_n=X^\alpha_n\setminus g(n)$, we get that
\begin{itemize}
\item[$(vi)$]
$\bigcup_{n\in\w}Y_n$ is almost disjoint from $A_\beta$ for all
$\beta<\alpha$.
\end{itemize}
\begin{claim}\label{cl01}
For every $m\in\w$ there exists $B_m\in [Y_m]^\w$ such that
$B=\bigcup_{m\in\w}B_n$ has the following properties:
$$ \forall k\in\w\:\forall a\in [\alpha]^{<\w}\big( \{n\in I_k: P_n\cap (B\cup\bigcup_{\beta\in a}A_\beta)=\emptyset\}\mbox{ is infinite}\big) $$
and $|B\cap P_n|\leq 1$ for all $n\in\w$.
\end{claim}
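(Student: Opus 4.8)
The plan is to construct all the sets $B_m$ simultaneously by a single recursion on $t\in\w$: at step $t$ we add one fresh point to each of $B_0,\dots,B_t$ and, at the same time, permanently ``reserve'' one new index $n\in I_k$ that will help witness the first displayed requirement for a single pair $(k,a)$. Since $\alpha<\w_1$, the set $\w\times[\alpha]^{<\w}$ is countable, so we may fix an enumeration $\la(k_t,a_t):t\in\w\ra$ of it in which every pair occurs at infinitely many steps $t$. We shall also use repeatedly that, because $\bigcup_nP_n$ is cofinite in $\w$ while each $P_n$ is finite, every infinite subset of $\w$ --- in particular each $Y_m$ --- meets $P_n$ for infinitely many $n$.

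Along the recursion we maintain a finite set $R_t$ of reserved indices and a finite set $U_t$ of indices already used by points put into some $B_m$, subject to the invariants $R_t\cap U_t=\emptyset$, $R_t\sbst R_{t+1}$, $U_t\sbst U_{t+1}$, and: all points chosen up to step $t$ lie in pairwise distinct sets $P_n$. At step $t$, writing $(k,a)=(k_t,a_t)$, property $(iii)$ of the construction gives that $G_{k,a}:=\{n\in I_k:(\bigcup_{\beta\in a}A_\beta)\cap P_n=\emptyset\}$ is infinite, so we may pick $n^*\in G_{k,a}\sm(U_t\cup R_t)$ and set $R_{t+1}=R_t\cup\{n^*\}$. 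Then, going through $m=0,1,\dots,t$ in turn, we choose $b_m\in Y_m\cap P_{n_m}$, where $n_m$ is any index in the infinite pool $\{n:P_n\cap Y_m\neq\emptyset\}$ lying outside $U_t$, outside $R_{t+1}$, and outside $\{n_{m'}:m'<m\}$; we add $b_m$ to $B_m$ and $n_m$ to the used set. Since at each point the forbidden set of indices is finite while the available pool is infinite, all choices can be made, and all invariants are preserved.

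Finally put $B=\bigcup_mB_m$. Each $B_m$ is an infinite subset of $Y_m$, because it gains a new point at every step $t\geq m$; and $|B\cap P_n|\leq1$ for all $n$, since all chosen points lie in pairwise distinct $P_n$. Note also that no reserved index is ever used: for $s\geq t$ we have $R_s\cap U_t\sbst R_s\cap U_s=\emptyset$, and for $s<t$ we have $R_s\cap U_t\sbst R_t\cap U_t=\emptyset$, so $R:=\bigcup_tR_t$ is disjoint from $\{n:P_n\cap B\neq\emptyset\}$. Now fix $k\in\w$ and $a\in[\alpha]^{<\w}$; the pair $(k,a)$ is processed at infinitely many steps $t$, and at each of them a new element of $G_{k,a}$ enters $R_{t+1}$, so $R\cap G_{k,a}$ is infinite. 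As every $n\in G_{k,a}$ lies in $I_k$ and satisfies $(\bigcup_{\beta\in a}A_\beta)\cap P_n=\emptyset$, and moreover $P_n\cap B=\emptyset$ for $n\in R$, the set $\{n\in I_k:P_n\cap(B\cup\bigcup_{\beta\in a}A_\beta)=\emptyset\}$ contains the infinite set $R\cap G_{k,a}$, as required.

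The only delicate point is the finitary bookkeeping that keeps every stage legal --- the union of the used indices, the reserved indices, and the finitely many indices already picked within the current step must stay finite, which it does --- together with the single appeal to the previously built $A_\beta$'s, namely the infinitude of $G_{k,a}$; that appeal is precisely property $(iii)$ of the inductive construction, which is why $(iii)$ was imposed in the first place.
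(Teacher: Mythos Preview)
Your proof is correct, and it takes a genuinely different route from the paper's argument. The paper first passes to pseudointersections: for each $k$ it fixes $N^k\in[I_k]^\w$ with $N^k\subset^*N^k_a$ for every $a\in[\alpha]^{<\w}$ (where $N^k_a$ is your $G_{k,a}$), and then performs a three-way case split on each $m$ according to whether $Y_m$ meets $\bigcup_{n\in N^k}P_n$ for infinitely many $k$, for a single $k$ with infinitely many such $n$, or only finitely often altogether; the sets $B_m$ are then defined separately in each case and one checks that for every $k$ almost all of $N^k$ (or of a fixed infinite $R_k\subset N^k$) stays disjoint from $B$. Your argument bypasses all of this structure by a single diagonal ``reserve-and-use'' bookkeeping: you never need the pseudointersections $N^k$, and you never need to analyse how the $Y_m$'s sit relative to them. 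What your approach buys is brevity and transparency; what the paper's decomposition buys is a more explicit description of where the $B_m$'s live relative to the block structure $\{N^k\}$, which could in principle be reused, though in the present proof it is not. Both arguments rely on exactly the same outside input, namely the inductive hypothesis $(iii)$ guaranteeing that each $G_{k,a}$ is infinite.
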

\begin{proof}
 For every $k\in\w$ and $a\in [\alpha]^{<\w}$ set
 $N^k_a=\{n\in I_k: P_n\cap \bigcup_{\beta\in a}A_\beta=\emptyset\}$
 and note that by our assumptions $\{N^k_a:a\in [\alpha]^{<\w}\}$
 is a countable centered family of infinite subsets of
 $I_k$, and hence there exists $N^k\in [I_k]^\w$ such that
 $N^k\subset^*N^k_a$ for all $a$ as above.
Let
$$M_\infty=\{m\in\w:\exists^\infty k\exists n\in N^k (Y_m\cap P_n)\neq\emptyset\}$$
and for every $m\in M_\infty$ set $J_m=\{k\in\w:\exists n\in N^k
(Y_m\cap P_n)\neq\emptyset\}\in [\w]^\w.$ Pick $J'_m\in [J_m]^\w$
for all $m\in M_\infty$ such that $J'_{m_0}\cap J'_{m_1}=\emptyset$
for arbitrary $m_0\neq m_1$ in $M_\infty$. Given $m\in M_\infty$,
for every $k\in J'_m$ pick $n_{m,k}\in N^k$ such that $Y_m\cap
P_{n_{m,k}}\neq\emptyset$, and fix $l_{m,k}\in P_{n_{m,k}}\cap Y_m$.
For every $m\in M_\infty$ set $B_m=\{l_{m,k}:k\in J'_m\}$.

Suppose now that $m\in\w\setminus M_\infty$. Two cases are possible.

$a)$ \ There exists $k_m\in\w $ such that $L_m:=\{n\in
N^{k_m}:Y_m\cap P_n\neq\emptyset\}$ is infinite. Given $k\in\w$, for
every $m$ such that $k=k_m$ find $Q_m\in [L_m]^\w$, and $R_k\in
[N^k]^\w$ such that $Q_{m_0}\cap Q_{m_1}=\emptyset$ for any distinct
$m_0,m_1$ such that $k=k_{m_0}=k_{m_1}$, and $R_k\cap Q_m=\emptyset$ for
all $m$ with $k=k_m$. Now for every $n\in Q_m$ pick $q_{m,n}\in
Y_m\cap P_n$ and set $B_m=\{q_{m,n}:n\in Q_m\}$.

$b)$ \  The set
$$ S_m:= \{\la k,n\ra: k\in\w, n\in N^k, Y_m\cap P_n\neq\emptyset\} $$
is finite. Then let
$$B_m\in \big[Y_m\setminus\bigcup\big\{P_n:\exists k (\la k,n\ra\in S_m)\big\}\big]^\w$$
be such that for each $n$ we have $|B_m\cap P_n|\leq 1$.

Thus we have already constructed the sequence $\la B_m:m\in\w\ra$.
We claim that $B=\bigcup_{m\in\w}B_m$ is as required. By the choice
of $N^k$ it suffices to prove that
$$ \forall k\in\w \big( \{n\in N^k: P_n\cap B=\emptyset\}\mbox{ is infinite}\big). $$
We shall show that if $k=k_m$ for some $m$, then $P_n\cap
B=\emptyset$ for all but maybe one $n\in R_k$. Otherwise $P_n\cap
B=\emptyset$ for all but maybe one $n\in N^k$. Indeed, by the
construction (more precisely, since all $J'_m,$ $m\in M_\infty$ are
mutually disjoint), the union $B_\infty:=\bigcup_{m\in M_\infty}B_m$
has the property that for every $k\in\w$ there exists at most one
$n\in N^k$ such that $B_\infty\cap P_n\neq\emptyset.$ Now if
$m\in\w\setminus M_\infty$ and case $b)$ takes place, then $B_m$
intersects no $P_n$ for $n\in\bigcup_{k\in\w}N^k$. And finally, if
$m\in\w\setminus M_\infty$ and $a)$ takes place with $k=k_m$, then
$B_m\subset\bigcup_{n\in N^k}P_n$ and $B_m\cap\bigcup_{n\in
R_k}P_n=\emptyset$. Since the $I_k$'s (and hence also the $N^k$'s)
are mutually disjoint, this completes our proof.
\end{proof}

\begin{claim}\label{cl02}
Let $\la n_i:i\in\w\ra$ be the increasing enumeration of the
set\footnote{This set is infinite by the definition of $F$ and
$f_\alpha\in F$.} $\{n\in\w:\exists k(n\in I_k\wedge P_n\subset
f_\alpha(k))\}$. Then there exists $C\in[\w]^\w$ such that $|C\cap
A_\beta|<\w$ for all $\beta<\alpha$, $|C\cap P_{n_i}|=1$ for all
$i$, and $C\cap P_n=\emptyset$ if $n\not\in\{n_i:i\in\w\}$.
\end{claim}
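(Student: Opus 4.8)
The plan is to construct $C$ by choosing, for each $i\in\w$, exactly one point $c_i\in P_{n_i}$, in such a way that the resulting set $C=\{c_i:i\in\w\}$ meets every $A_\beta$ ($\beta<\alpha$) in only a finite set. The conditions $|C\cap P_{n_i}|=1$ and $C\cap P_n=\emptyset$ for $n\notin\{n_i:i\in\w\}$ will then hold automatically. Since the $P_k$ are mutually disjoint, it suffices to guarantee that for each $\beta<\alpha$ there is only a finite set of indices $i$ with $c_i\in A_\beta$.

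Here the key point is the combination of property $(ii)$ in the inductive construction --- $|A_\beta\cap P_k|\le 2$ for every $\beta$ and $k$ --- together with CH (or, in the general case, $\hot p=\cc$), which lets us enumerate $\{A_\beta:\beta<\alpha\}$ as $\{A'_j:j\in\w\}$. First I would fix such an enumeration. Then I would choose the points $c_i$ recursively in $i$: having chosen $c_0,\dots,c_{i-1}$, I pick $c_i\in P_{n_i}$ so that $c_i\notin A'_j$ for all $j<i$. This is possible because $P_{n_i}\subset 2^{n_i+1}\setminus 2^{n_i}$ has size $2^{n_i}$, while $\bigcup_{j<i}(A'_j\cap P_{n_i})$ has size at most $2i$ by $(ii)$, and $2^{n_i}>2i$ for all but finitely many $i$; by discarding a finite initial segment of the $n_i$'s (which does not affect infinitude) we may assume $2^{n_i}>2i$ for every $i$, so such a $c_i$ always exists. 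With this choice, for a fixed $\beta<\alpha$ we have $\beta=$ (the index of $A'_j$ for a unique $j$), and $c_i\in A'_j$ can happen only for $i\le j$, hence $|C\cap A_\beta|\le j+1<\w$, as required.

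For the $\hot p=\cov(\NN)=\cc$ version, the enumeration of $\{A_\beta:\beta<\alpha\}$ into an $\w$-sequence is no longer available when $\alpha\ge\w_1$; instead one argues as in Claim~\ref{cl01}, using that $\{A_\beta:\beta<\alpha\}$ together with the relevant finite-union restrictions forms a family of size $<\cc\le\hot p$, so that a diagonalization respecting all the finitely-many-intersections demands can still be carried out --- but I expect the bookkeeping here to be the only genuine obstacle, and it is exactly parallel to what was already done in the proof of Claim~\ref{cl01}. The essential mechanism, in all cases, is that $(ii)$ keeps each $A_\beta$ from occupying more than two slots in any single $P_k$, so a single point chosen freely in $P_{n_i}$ can dodge any fixed finite batch of the $A_\beta$'s. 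I would close by remarking that $C$ will later serve to enforce property $(v)$ for the ordinal $\alpha$, which is why we insist on hitting exactly the blocks $P_{n_i}$ prescribed by $f_\alpha$.
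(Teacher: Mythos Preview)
Your CH argument is correct and matches the paper's approach: the paper encodes each $A_\beta\cap\bigcup_i P_{n_i}$ as the union of at most two graphs of functions in $\prod_i P_{n_i}$ (using $(ii)$), obtains a countable family $G$ of such functions, and then picks $h\in\prod_i P_{n_i}$ eventually different from every member of $G$, setting $C=\mathrm{range}(h)$. Your direct diagonalization is just an explicit way of producing such an $h$.

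Where your proposal has a genuine gap is the passage to $|\alpha|\ge\w_1$. You suggest this is ``exactly parallel'' to Claim~\ref{cl01} and can be handled via $\hot p=\cc$, but it is not a pseudointersection problem: one needs a function in $\prod_i P_{n_i}$ eventually different from every member of a family $G$ of size $<\cc$, and $\hot p$ says nothing about this. The paper instead uses $\cov(\NN)=\cc$: it equips $\prod_i P_{n_i}$ with the product probability measure $\mu$ giving each point of $P_{n_i}$ mass $2^{-n_i}$, observes (Borel--Cantelli, since $\sum_i 2^{-n_i}<\infty$) that for each fixed $g$ the set $\{x:\exists^\infty i\ (x(i)=g(i))\}$ is $\mu$-null, and concludes that if $|G|<\cov(\NN)$ then these null sets do not cover the space, yielding the desired $h$. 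The paper's acknowledgements explicitly thank the referee for pointing out that the hypothesis $\cov(\NN)=\cc$ is needed here --- so your instinct that $\hot p$ suffices is exactly the oversight the authors themselves originally made. (One minor quibble in the CH case: you cannot literally ``discard a finite initial segment of the $n_i$'s'', since the claim requires $|C\cap P_{n_i}|=1$ for \emph{all} $i$; but of course you may pick arbitrary points in those finitely many blocks without affecting almost-disjointness.)
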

\begin{proof}
By $(ii)$ we can find a countable family $G$ of functions in
$\prod_{i\in\w}P_{n_i}$ such that
$A_\beta\cap(\bigcup_{i\in\w}P_{n_i})$ is covered by graphs of at
most 2 elements of $G$, for all $\beta<\alpha$. Now it is easy to
construct  $h\in \prod_{i\in\w}P_{n_i}$ eventually different from
each element of $G$. It follows that $C:=\mathrm{range}(h)$ is as
required.
\end{proof}

Set $A_\alpha=B\cup C$, where $B,C$ are such as in Claims~\ref{cl01}
and \ref{cl02}, respectively. Since $\{n_i:i\in\w\}\cap I_k$ is
finite for all $k\in\w$, it is easy to see that all conditions
$(i)$-$(v)$ are also satisfied for $\beta,\gamma\leq\alpha$ and
$a\in [\alpha+1]^{<\w}$.
\smallskip

2. There exists  $n\in\w$ and a finite $a\subset\alpha$ such that
$X^\alpha_n\subset^* \bigcup_{\gamma\in a}A_\gamma$. Set $A_\alpha=
C$, where $C$ is such as in Claim~\ref{cl02}. Again, all conditions
$(i)$-$(v)$ are  satisfied for $\beta,\gamma\leq\alpha$ and $a\in
[\alpha+1]^{<\w}$.
\smallskip

This completes our construction of a sequence $\la
A_\alpha:\alpha<\w_1\ra$ satisfying $(i)$-$(v)$. By $(i)$ and
$(iv)$, $\A=\{A_\alpha:\alpha<\w_1\}$ is an $\w$-mad family. By
$(iii)$ the family $\U_k=\{O_n:n\in I_k\}$ is an open cover of
$\F(\A)$ for all $k\in\w$, where $O_n=\{X\subset\w:P_n\subset X\}$.
We claim that the sequence $\la\U_k:k\in\w\ra$ witnesses that
$\F(\A)$ is not Menger. Indeed, otherwise there exists $\alpha$ such
that
$$\U:=\{O_n:\exists k\in\w (n\in I_k\wedge P_n\subset f_\alpha(k))\}$$
covers $\F(\A)$. However, $P_n\cap A_\alpha\neq\emptyset$ for all
$n\in I_k$ such that $P_n\subset f_\alpha(k)$ for some $k\in\w$, which
means that $\F(\A)\ni\w\setminus A_\alpha\not\in\cup\U$. This  leads
to a contradiction and thus finishes our proof under CH.
\smallskip

Except for the proof of Claim~\ref{cl02}, we have used CH to produce
at stage $\alpha$ a pseudointersection of a centered family of
infinite subsets of $\w$ of size $|\alpha|$, and $\hot p=\hot c$
suffices for finding such pseudointersections by the definition of
$\hot p$.

Regarding Claim~\ref{cl02},  we shall show\footnote{We believe that
this straightforward argument is well-known, but we were unable to
locate it in the literature.} that for any family
$G\subset\prod_{i\in\w}P_{n_i}$ of size $<\mathit{cov}(\mathcal N)$
there exists $h\in \prod_{i\in\w}P_{n_i}$ eventually different from
all elements of $G$ (here we use the same notation as in the
formulation of Claim~\ref{cl02}). Indeed, let
 $\mu$ be the Borel measure on
$\prod_{i\in\w}P_{n_i}$ such that for every $i\in\w$ and $s\in
\prod_{j\leq i}P_{n_j}$ we have $\mu([s])=\prod_{j\leq i}2^{-n_j}$,
where
$$ [s] = \{x\in \prod_{i\in\w}P_{n_i}:x\uhr(i+1)=s\}.$$
By \cite[Theorem~17.41]{Kec95} the measurable space $\la
\prod_{i\in\w}P_{n_i},\mu\ra$ is isomorphic to $\mathbb R$ equipped
with the standard Lebesgue measure $\lambda$. A simple calculation
shows that
$$ \mu\{x\in \prod_{i\in\w}P_{n_i}:\exists^\infty i\in\w (x(i)=g(i)) \}=0 $$
for every $g\in \prod_{i\in\w}P_{n_i}$. Since (by the definition)
$\mathbb R$ cannot be covered by fewer than $\mathit{cov}(\mathcal
N)$ many null subsets, neither $\la \prod_{i\in\w}P_{n_i},\mu\ra$
can, and hence Claim~\ref{cl02} holds for families $G$ of size
$<\mathit{cov}(\mathcal N)$. This completes our proof.
 \hfill $\Box$
\medskip

%\noindent\textbf{Remark.} Even though the question whether $\hot
%p=\hot c$ would be sufficient in Theorem~\ref{p_equals_c} remains
%open, we know that Claim~\ref{cl02} might fail in models of $\hot
%p=\hot c$.
%
%
%\hfill $\Box$
%\medskip

Every filter $\F$ on $\w$ gives rise to the filter $\F^{(<\w)}$ on
$\mathit{Fin}:=[\w]^{<\w}\setminus\{\emptyset\}$ generated by sets
$[F]^{<\w}\setminus\{\emptyset\}$, where $F\in\F$. For a family
$\mathcal B$ of infinite subsets of a countable set $X$ we denote by
$\B^+$ the family $\{Z\subset X:\forall B\in\B (|Z\cap B|=\w)\}$.
For every $E\subset \mathit{Fin}$ let us denote by $\K(E)$ the
family $\{K\subset\w:\forall e\in E(e\cap K\neq\emptyset)\}$. It is
easy to see that $\K(E)$ is always compact and $\K(E)\subset
[\w]^\w$ if for every $n\in\w$ there exists $e\in E$ such that $\min
e>n$. It is a straightforward exercise to check that $E\in
(\F^{(<\w)})^+$ iff $\K(E)\subset\F^+$.

In the next proof, we will use the notation $\zrost$ for the set of
the increasing functions from $\omega$ to $\omega$. Also, we will use the
fact that $\bb = \omega_2$ holds in the Laver model.

\medskip

 \noindent\textit{Proof of Theorem~\ref{laver}.} \ Let $\F=\F(\A)$. By
\cite[Corollary~2.2]{ChoRepZdo15} it suffices to prove that for
every decreasing sequence $\la S_n:n\in\w\ra$ of elements of
$(\F^{(<\w)})^+$ there exists $f\in\w^\w$ such that
$S_f:=\bigcup_{n\in\w}(S_n\cap\mathcal P(f(n)))$ belongs to
$(\F^{(<\w)})^+$, i.e., $\K(S_f)\subset\F^+$. Without loss of
generality we may assume that $\min s>n$ for all $s\in S_n$.

Since $\A$ is $\w$-mad, for every   countable family
$$\{\la
X^i_n:n\in\w\ra:i\in\w\}\subset\prod_{n\in\w}\K(S_n)$$
 there exists
$A\in\A$ such that $|A\cap X^i_n|=\w$ for all $i,n\in\w$. We claim
that there are actually $\w_2$-many $A\in\A$ as above. Indeed,
suppose that for some $\A'\in [\A]^{\w_1}$ there is no
$A\in\A\setminus\A'$ such that $|A\cap X^i_n|=\w$ for all
$i,n\in\w$. Fix a sequence $\la A_n:n\in\w\ra$ of mutually different
elements of $\A\setminus\A'$ and find $h\in\zrost$ such that
$$ \la\max (A\cap A_n)+1:n\in\w\ra \leq^* h$$
for all $A\in\A'$. Such an $h$ exists because $|\A'|<\hot b=\w_2$.
Set $X=\bigcup_{n\in\w}(A_n\setminus h(n))$ and note that $X\in\F^+$
and $|X\cap A|<\w$ for all $A\in\A' $. It follows that there is no
$A\in\A$ which intersects infinitely often all elements of the
family $\{X^i_n:i,n\in\w\}\cup \{X\}$, a contradiction.

 Let
$f\in\w^\w$ be increasing and such that $f(n)>\min(A\cap X^i_n)$ for
all $i\leq n$. Thus for every $i$ and all $n\geq i$ we have $A\cap
X^i_n\cap
 f(n)\neq\emptyset$. Set
$$G_{A,f}=\big\{\la X_n:n\in\w\ra\in \prod_{n\in\w}\K(S_n):
\exists^\infty n(A\cap X_n\cap  f(n)\neq\emptyset)\big\}$$ and note
that $G_{A,f}$ is a $G_\delta$-subset of $\prod_{n\in\w}\K(S_n)$
containing $\la X^i_n:n\in\w\ra$ for all $i\in\w$. Thus we have
proven that for every countable $Q\subset \prod_{n\in\w}\K(S_n)$
there exists $A\in\A$ and $f\in\zrost$ such that $Q\subset G_{A,f}$.
Moreover, there are $\w_2$-many such pairs $\la A,f\ra$ with
mutually different first coordinates. Let us fix $\A'\in
[\A]^{\w_1}$. Applying \cite[Lemma~2.2]{RepZdo17} we conclude that
there exists a family $\{\la
A_\alpha,f_\alpha\ra:\alpha<\w_1\}\subset\A\times\zrost$ such that
$\prod_{n\in\w}\K(S_n)\subset\bigcup_{\alpha<\w_1}G_{A_\alpha,f_\alpha}$
and $\A'\cap\{A_\alpha:\alpha<\w_1\}=\emptyset$. Since $\A'$ was
chosen arbitrarily, it follows from the above that we can
additionally assume that each $\la X_n:n\in\w\ra\in
\prod_{n\in\w}\K(S_n)$ is contained in $G_{A_\alpha, f_\alpha}$ for
infinitely many $\alpha$. Pick $f\in\zrost$ such that
$f_\alpha\leq^*f$ for all $\alpha$. We claim that
$\K(S_f)\subset\F^+$. Indeed, for every $n\in\w$ and $s\in
S_n\cap\mathcal P(f(n))$ select $k_{s,n}\in s$. We are left with the
task to prove that $X=\{k_{s,n}:s\in S_n\cap\mathcal
P(f(n))\}\in\F^+$. For this sake,  for every $n$ and $s\in
S_n\setminus\mathcal P(f(n))$ select $l_{s,n}\in s\setminus f(n)$
and consider the sequence $\la X_n:n\in\w\ra\in
\prod_{n\in\w}\K(S_n)$, where
$$ X_n = \big\{k_{s,n}: s\in S_n\cap\mathcal
P(f(n))\big\} \cup \big\{ l_{s,n}: s\in S_n\setminus\mathcal
P(f(n))\big\}.
$$
Our proof will be completed as soon as we show that $X\cap A_\alpha$
is infinite for all $\alpha$ such that $\la X_n:n\in\w\ra\in
G_{A_\alpha,f_\alpha}$. So let us fix such an $\alpha$ and
$m_0\in\w$. Let $m\geq m_0$ be such that $f_\alpha(n)\leq f(n)$ for
all $n\geq m$.
 By the definition of $G_{A_\alpha,f_\alpha}$ there exists
 $n\geq m$ such that $\emptyset\neq X_n\cap A_\alpha\cap
 f_\alpha(n)$,
 and hence $\emptyset\neq X_n\cap A_\alpha\cap
 f(n)$. Fix $j$ in the latter intersection.
 It follows that $j$ cannot be of the form $ l_{s,n}$
 for $s\in S_n\setminus\mathcal
P(f(n))$ because $j\in f(n)$, an hence $j=k_{s,n}$ for some $s\in
S_n\cap\mathcal P(f(n))$, which yields $j\in X$ and thus completes
our proof. \hfill $\Box$
\medskip

\noindent\textbf{Acknowledgements.} The
 work reported here
was carried out during the visit of the second named author at the
Instituto de Ci\^encias Matem\'aticas e de Computa\c c\~ao, Universidade de S\~ao Paulo, in July 2018.  This visit
was  supported  by  FAPESP (2017/09252-3). The second named author thanks the first
named author and Lucia Junqueira  for their kind hospitality. In addition, we thank Osvaldo Guzman
for his comments on the previous version of this paper.

We are also grateful to the anonymous referee who among other things
pointed out that our proof of Theorem~\ref{p_equals_c} requires also
the assumption  $\mathit{cov}(\mathcal N)=\hot c$.

\end{document}